\documentclass[11pt,reqno]{amsart}

\numberwithin{equation}{section}

\usepackage{color,graphics,epsfig,amsmath,a4wide,psfrag}

\newcommand{\mbfc}{\mathbf{c}}
\newcommand{\mbfd}{\mathbf{d}}

\newcommand{\mbfg}{\mathbf{g}}

\newcommand{\mbfn}{\mathbf{n}}

\newcommand{\mbfp}{\mathbf{p}}

\newcommand{\mbfx}{\mathbf{x}}
\newcommand{\mbfy}{\mathbf{y}}

\newcommand{\mC}{\mathbb{C}}
\newcommand{\mD}{\mathbb{D}}

\newcommand{\mF}{\mathbb{F}}

\newcommand{\mR}{\mathbb{R}}
\newcommand{\mS}{\mathbb{S}}

\newtheorem{theorem}{Theorem}[section]
\newtheorem{lemma}[theorem]{Lemma}

\newtheorem{proposition}[theorem]{Proposition}

\theoremstyle{definition}

\theoremstyle{definition}

\theoremstyle{definition}

\begin{document}

\keywords{chordal metric, robust control, strong stabilization, Banach algebras}

\subjclass{Primary 93B36; Secondary 93D15, 46J15, 93D09}

\title[Generalized chordal metric]{A generalized chordal metric making strong stabilizability a robust property}

\author{Amol Sasane}
\address{Department of Mathematics, London School of Economics,
     Houghton Street, London WC2A 2AE, United Kingdom.}
\email{sasane@lse.ac.uk}

\begin{abstract}
 An abstract chordal metric is defined  on linear control systems 
 described by their transfer functions. Analogous to a previous result due to 
 Jonathan~Partington~\cite{Par} for $H^\infty$, it is shown  
 that strong stabilizability is a robust property in this metric. 
\end{abstract}

\maketitle

\section{Introduction}

The aim of this note is to give an extension of a 
result due to Jonathan Partington (recalled below in Proposition~\ref{theorem_partington}) saying that 
 {\em strong} stabilizability is a robust property of the plant in the chordal metric.  
The basic and almost unique ingredient in the proof of this fact is a result proved by 
Partington in \cite[Lemma~2.1, p.84]{Par92} (which we have restated in 
Lemma~\ref{lemma_partington}). The only new point is that we prove that 
the analogous result holds in an abstract setting, hence expanding the domain of applicability from the original setting 
of unstable plants over $H^\infty$ to ones over arbitrary rings of stable 
transfer functions satisfying mild assumptions. (Here, as is usual in the control engineering literature, $H^\infty$ denotes the Hardy algebra of 
bounded holomorphic functions defined in the complex open right half plane $\{s\in \mC: \textrm{Re}(s)>0\}$.)

We recall the general {\em stabilization problem} in control theory.
Suppose that $R$ is an integral domain with identity
(thought of as the class of stable transfer functions) and let
$\mF(R)$ denote the field of fractions of $R$. Then the stabilization
problem is:

\begin{itemize}
 \item[] 
Given $\mbfp\in \mF(R) $ (an unstable plant transfer function),

 \item[] find $\mbfc \in \mF(R)$ (a stabilizing controller
  transfer function),

 \item[] such that (the closed loop transfer function)
$$
H(\mbfp,\mbfc):= \left[\begin{array}{cc} {\displaystyle \frac{\mbfp}{1-\mbfp\mbfc} }_{\phantom{p}} & \displaystyle\frac{\mbfp\mbfc}{1-\mbfp\mbfc} \\
                        \displaystyle\frac{\mbfp\mbfc}{1-\mbfp\mbfc} & \displaystyle\frac{\mbfc}{1-\mbfp\mbfc} 
                       \end{array}\right]
$$

\item[] belongs to $R^{2\times 2}$ (that is, it is stable). 
\end{itemize}

\medskip

\noindent The demand above that
$H(\mbfp,\mbfc)\in R^{2\times 2}$ guarantees that the ``closed loop'' transfer function of the
signal map 
$$
\left[ \begin{array}{cc}
u_1\\
u_2 \end{array}\right]
\mapsto
\left[ \begin{array}{cc}
y_1 \\
y_2 \end{array}\right],
$$
in the interconnection of $\mbfp$ and $\mbfc$ as shown in Figure~\ref{figure_feedback_config}, is stable. (So after the interconnection, ``nice'' signals are indeed
mapped to nice signals.) 
\begin{figure}[h]
    \center
    \psfrag{P}[c][c]{$\mbfp$}
    \psfrag{C}[c][c]{$\mbfc$}
    \psfrag{u}[c][c]{$u_1$}
    \psfrag{v}[c][c]{$u_2$}
    \psfrag{y}[c][c]{$y_1$}
    \psfrag{z}[c][c]{$y_2$}
    \includegraphics[width=5.4 cm]{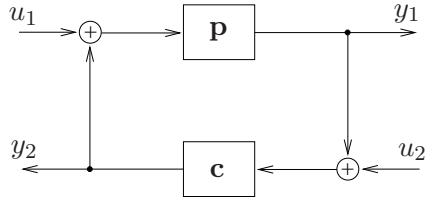}
    \caption{Feedback connection of the plant $\mbfp$ with the controller $\mbfc$.}
    \label{figure_feedback_config}
 \end{figure}

A stronger version of the problem is when we require a 
{\em stable} controller $\mbfc\in R$ which stabilizes $\mbfp$. If such a $\mbfc$ exists, then 
we say that $\mbfp$ is {\em strongly stabilizable}.

 In the {\em robust stabilization problem}, one goes a step
further than the stabilization problem.  
One knows that the plant is just an approximation of
reality, and so one would really like the controller $\mbfc$ to not only
stabilize the {\em nominal} plant $\mbfp_0$, but also all sufficiently close
plants $\mbfp$ to $\mbfp_0$.  The question of what one means by ``closeness''
of plants thus arises naturally. So one needs a function $d$ defined
on pairs of stabilizable plants such that
\begin{enumerate}
\item $d$ is a metric on the set of all stabilizable
plants,
\item $d$ is amenable to computation, and
\item stabilizability is a robust property of the plant with respect
  to $d$.
\end{enumerate}
There are various known metrics which do the job, notably the gap metric (\cite{ZamElS}), the graph metric (\cite{Vid}) 
and the Vinnicombe $\nu$-metric (see \cite{Vin} for the rational transfer function case and \cite{BalSas}, \cite{Sas} 
for its recent extension for nonrational transfer functions). This last metric is in some sense the ``best'' one, 
as it is comparatively easy to compute and admits some sharp robustness results. The Vinnicombe metric 
itself arose from a very natural idea of defining a metric between meromorphic functions in the 
complex right half plane, namely the pointwise chordal metric, defined below. This metric has been studied by 
function theorists (see for example \cite{Hay}), since it is a natural analogue of the $H^\infty$ 
distance between bounded analytic functions, and it can be used for functions with poles in a disk. 
The use of the chordal metric to study robustness of stabilizability was made by Ahmed El-Sakkary in \cite{Sak}.

If $\mbfp_1,\mbfp_2$ are two meromorphic functions in the open right half plane, then the 
{\em chordal distance} $\kappa$ between $\mbfp_1,\mbfp_2$ is 
$$
\kappa(\mbfp_1,\mbfp_2):=\sup_{\substack{s\in \mC;\; \textrm{Re}(s)>0;\\ \textrm{either } 
\mbfp_1(s)\neq \infty\textrm{ or } \mbfp_2(s)\neq \infty}} \frac{|\mbfp_1(s)-\mbfp_2(s)|}{\sqrt{1+|\mbfp_1(s)|^2} \sqrt{1+|\mbfp_2(s)|^2}}.
$$
This metric has the interpretation that it is the supremum of the pointwise 
Euclidean distance between the points $\mbfp_1(s)$ and $\mbfp_2(s)$ on the Riemann sphere. 
Recall that the stereographic projection allows the identification of  the extended 
complex plane $\mC\cup \{\infty\}$ with the unit sphere $\mathbf{S}$ of diameter $1$ 
in $\mR^3$, where the point $z=0$ in the complex plane corresponds 
to the south pole $S$ of the sphere $\mathbf{S}$ and the point $z=\infty$ 
corresponds to the north pole $N$ of $\mathbf{S}$. Points $P_\mC$ in the complex plane 
can be identified with a corresponding point $P_{\mathbf{S}}$ on the sphere $\mathbf{S}$, 
namely the one in $\mathbf{S}$ which lies on the straight line joining $P_\mC$ and $N$. See Figure~\ref{riemann}.  
\begin{figure}[h]
    \center
    \psfrag{P}[c][c]{${\scriptscriptstyle P_{\scriptscriptstyle\mathbf{S}}}$}
    \psfrag{z}[c][c]{${\scriptscriptstyle P_{\scriptscriptstyle\mC}\equiv z}$}
    \psfrag{S}[c][c]{${\scriptscriptstyle \textrm{S}\equiv 0}$}
    \psfrag{N}[c][c]{${\scriptscriptstyle \textrm{N}\equiv \infty}$}
    \psfrag{C}[c][c]{$\mC$}
    \psfrag{s}[c][c]{${\mathbf{S}}$}   
    \psfrag{n}[c][c]{${\scriptscriptstyle \frac{1}{2}}$}
    \includegraphics[width=9 cm]{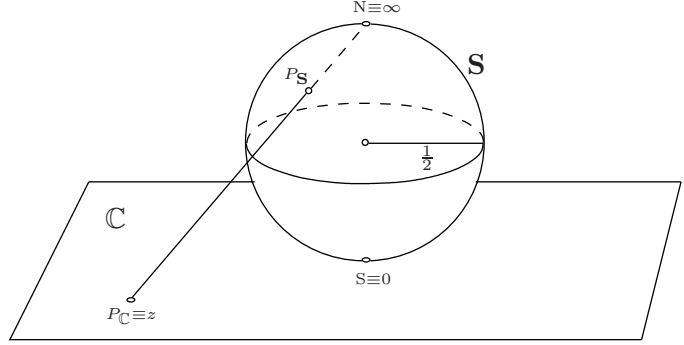}
    \label{riemann}
    \caption{The Riemann sphere with diameter $1$ and centre at $\left(0,0,\displaystyle \frac{1}{2}\right)$.}
 \end{figure}

The following result was shown by Jonathan Partington (see \cite[Theorem~2.2, p.84]{Par92} or \cite[Theorem~4.3.4, p.83]{Par}). 

\begin{proposition}
 \label{theorem_partington} 
Let $\mbfp_0, \mbfp \in \mF(H^\infty)$, and let $\mbfc\in H^\infty$ be such that 
$
\mbfg_0:= \displaystyle \frac{\mbfp_0}{1-\mbfc\mbfp_0} \in H^\infty.
$
Set $k:= \|\mbfc\|_\infty$ and $g=\|\mbfg_0\|_\infty$. If 
$$
\kappa(\mbfp,\mbfp_0)< \displaystyle \frac{1}{3} \min\left\{1, \;\;\frac{1}{g}, \;\;\frac{1}{k(1+kg)} \right\},
$$
then $\mbfp$ is also stabilized by $\mbfc$.
\end{proposition}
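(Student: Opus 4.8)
The plan is to reduce the stabilization of $\mbfp$ by $\mbfc$ to the single statement $\mbfg:=\mbfp/(1-\mbfc\mbfp)\in H^\infty$, to bound the chordal distance $\kappa(\mbfg,\mbfg_0)$ in terms of $\kappa(\mbfp,\mbfp_0)$, and then to invoke Lemma~\ref{lemma_partington}. First I would record the algebraic reduction: $\mbfg$ is the $(1,1)$-entry of $H(\mbfp,\mbfc)$, and one has $1/(1-\mbfc\mbfp)=1+\mbfc\mbfg$, so the off-diagonal entries of $H(\mbfp,\mbfc)$ equal $\mbfc\mbfg$ and the $(2,2)$-entry equals $\mbfc+\mbfc^{2}\mbfg$. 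Since $\mbfc\in H^\infty$, it follows that $H(\mbfp,\mbfc)\in (H^\infty)^{2\times 2}$ if and only if $\mbfg\in H^\infty$; so, with $\mbfg_0:=\mbfp_0/(1-\mbfc\mbfp_0)\in H^\infty$ given, the entire task is to prove $\mbfg\in H^\infty$. (One must also check $1-\mbfc\mbfp\not\equiv 0$, so that $\mbfg\in\mF(H^\infty)$ is meaningful; were $\mbfp\equiv 1/\mbfc$ then $\mbfg\equiv\infty$ and hence $\kappa(\mbfg,\mbfg_0)\geq 1/\sqrt{1+g^{2}}$, which the estimate below together with the hypothesis rules out.)

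Next I would carry out the chordal estimate, which is the crux. Starting from $\mbfg-\mbfg_0=(\mbfp-\mbfp_0)/\big((1-\mbfc\mbfp)(1-\mbfc\mbfp_0)\big)$ and the pointwise identity $1-\mbfc\mbfp=\mbfp/\mbfg$ (equivalently $1/\mbfg=1/\mbfp-\mbfc$, which after applying the chordal isometry $z\mapsto 1/z$ to $\mbfg$ and $\mbfp$ is merely a translation by $\mbfc$), a short computation gives, for $s$ with $\textrm{Re}(s)>0$,
$$
\frac{\chi\big(\mbfg(s),\mbfg_0(s)\big)}{\chi\big(\mbfp(s),\mbfp_0(s)\big)}
=\left(\frac{|1+\mbfc(s)\mbfg(s)|^{2}+|\mbfg(s)|^{2}}{1+|\mbfg(s)|^{2}}\right)^{1/2}\left(\frac{|1+\mbfc(s)\mbfg_0(s)|^{2}+|\mbfg_0(s)|^{2}}{1+|\mbfg_0(s)|^{2}}\right)^{1/2},
$$
where $\chi(\cdot,\cdot)(s)$ denotes the pointwise chordal distance, so that $\kappa=\sup_{s}\chi$. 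Bounding $|1+\mbfc\mbfg|\leq 1+k|\mbfg|$ reduces each factor to the scalar function $b\mapsto\big((1+kb)^{2}+b^{2}\big)\big/(1+b^{2})$, whose maximum over all $b\geq 0$ equals $1+kb^{\ast}$ with $b^{\ast}=\tfrac{1}{2}\big(k+\sqrt{k^{2}+4}\big)$, and whose maximum over $0\leq b\leq g$ is at most $(1+kg)^{2}$ when $g\leq 1$. Applying the unconstrained bound to the $\mbfg$-factor and the constrained one---which uses $\|\mbfg_0\|_\infty=g$---to the $\mbfg_0$-factor, and taking $\sup_{s}$, I get $\kappa(\mbfg,\mbfg_0)\leq C(k,g)\,\kappa(\mbfp,\mbfp_0)$ for an explicit $C(k,g)$ assembled from these two bounds.

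Finally I would check that $\tfrac{1}{3}\min\{1,1/g,1/(k(1+kg))\}$ is exactly chosen so that $\kappa(\mbfp,\mbfp_0)$ below it forces $\kappa(\mbfg,\mbfg_0)$ into the range where Lemma~\ref{lemma_partington} applies to the pair $(\mbfg_0,\mbfg)$ with $\|\mbfg_0\|_\infty=g$; that lemma then yields $\mbfg\in H^\infty$, and by the first paragraph $\mbfc$ stabilizes $\mbfp$. The hard part will be precisely this constant bookkeeping: one must push the two factor-estimates through carefully, split into the cases $g\leq 1$ and $g\geq 1$, and compare $k(1+kg)$ with $\max\{1,g\}$, so as to verify that the displayed bound does not exceed the ratio of Lemma~\ref{lemma_partington}'s threshold to $C(k,g)$. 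Everything else---the reduction, the displayed identity, and the analysis of the scalar function---is routine.
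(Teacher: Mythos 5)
Your reduction (with $\mbfc\in H^\infty$, stability of $H(\mbfp,\mbfc)$ is equivalent to $\mbfg:=\mbfp/(1-\mbfc\mbfp)\in H^\infty$), your pointwise identity for $\chi(\mbfg,\mbfg_0)/\chi(\mbfp,\mbfp_0)$, and the idea of extracting boundedness of $\mbfg$ from chordal closeness to the bounded function $\mbfg_0$ via Lemma~\ref{lemma_partington} are all sound. The problem is the step you defer as ``constant bookkeeping'': with the estimates you actually set up, it provably cannot deliver the stated radius, for any $k>0$. Your chain requires
$$
C(k,g)\cdot\frac13\min\Bigl\{1,\;\frac1g,\;\frac1{k(1+kg)}\Bigr\}\;\le\;\frac13\min\Bigl\{1,\;\frac1g\Bigr\},
$$
where $\frac13\min\{1,1/g\}$ is the threshold below which Lemma~\ref{lemma_partington} (with $a=1/\sqrt2$) forces a uniform bound on $|\mbfg|$, and $C(k,g)=\sqrt{1+kb^{\ast}}\,(1+kg)$ is the product of your two factor bounds. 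If $\frac1{k(1+kg)}\ge\min\{1,1/g\}$ this reduces to $C(k,g)\le 1$, which is false for $k>0$; if $\frac1{k(1+kg)}<\min\{1,1/g\}$ it reduces to $C(k,g)\le k(1+kg)\min\{1,1/g\}\le k(1+kg)$, which is also false because $b^{\ast}>k$ gives $\sqrt{1+kb^{\ast}}>\sqrt{1+k^2}>k$. So, as written, your argument proves robustness only for a strictly smaller radius than the one claimed. Rescuing the exact constant along this route would need genuinely sharper ingredients (the sharp threshold $1/\sqrt{1+g^2}$ in place of $\frac13\min\{1,1/g\}$, and the sharp joint supremum of the two factors with the same value $\mbfc(s)$ in both), and even then the verification is delicate and regime-dependent; it is not the routine check you suggest.

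The paper avoids this loss entirely by not passing through a uniform multiplicative bound on closed-loop distances. In its proof of Theorem~\ref{main_theorem} (of which this proposition is the $H^\infty$ instance), one assumes the closed loop for $\mbfp$ is unstable and produces a critical point --- a character $\varphi_0$ at which $\widehat{\mbfd}-\widehat{\mbfn}\widehat{\mbfc}$ vanishes, i.e.\ the value of $\mbfp$ there equals $1/\mbfc$. At that single point one has the exact lower bounds $|\mbfp-\mbfp_0|=|1/\mbfc|\,|1-\mbfc\mbfp_0|\ge\frac1{k(1+kg)}$ (since $|1/(1-\mbfc\mbfp_0)|=|1+\mbfc\mbfg_0|\le1+kg$) and $|1/\mbfp-1/\mbfp_0|=|\mbfc-1/\mbfp_0|=1/|\mbfg_0|\ge 1/g$, and Lemma~\ref{lemma_partington} applied directly to the pair $(\mbfp,\mbfp_0)$ with $a=1/\sqrt2$ yields $\kappa(\mbfp,\mbfp_0)\ge\frac13\min\{1,\frac1g,\frac1{k(1+kg)}\}$, contradicting the hypothesis with no degradation of constants. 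The structural point your route misses is that the worst case of your multiplicative factor (moderate $|\mbfg|\approx b^{\ast}$ with $\mbfc\mbfg$ positively aligned) is not the configuration that threatens stability, namely $|\mbfg|$ huge, i.e.\ $\mbfp$ near $1/\mbfc$; estimating at those critical configurations directly is exactly what produces the stated bound.
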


This  follows from the following key estimate, which gives a lower bound on the chordal distance; 
see \cite[Lemma~2.1, p.84]{Par92} or \cite[Lemma~4.3.3]{Par}. 

\begin{lemma}
 \label{lemma_partington}
If $z_1,z_2\in \mC$ and $0<a<1$, then 
$$
\kappa(z_1,z_2) :=\frac{|z_1-z_2|}{\sqrt{1+|z_1|^2} \sqrt{1+|z_2|^2}}\geq 
\min \left\{ \frac{a^2}{1+a^2} |z_1-z_2|, \;\;\frac{a^2}{1+a^2} \left|\frac{1}{z_1}-\frac{1}{z_2} \right|,\;\; \frac{1-a^2}{1+a^2}\right\}.
$$
\end{lemma}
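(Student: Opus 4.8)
The plan is to prove the inequality by a case distinction on the size of $|z_1|$ and $|z_2|$, the idea being that the chordal distance behaves like the Euclidean distance $|z_1-z_2|$ when both points are near the origin, like the Euclidean distance $|1/z_1-1/z_2|$ between the reciprocals when both points are large (i.e.\ near $\infty$ on the sphere), and is bounded below by a positive constant when the two points are ``on opposite sides'' (one small, one large). The parameter $a$ with $0<a<1$ will be the threshold separating ``small'' from ``large'': I would call $z$ small if $|z|\le 1/a$ (equivalently $|1/z|\ge a$) and large if $|z|\ge a$ — note these overlap, which is harmless and in fact convenient.

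First I would handle the case where both $|z_1|\le 1/a$ and $|z_2|\le 1/a$. Then $\sqrt{1+|z_j|^2}\le\sqrt{1+1/a^2}=\sqrt{1+a^2}/a$, so the denominator in $\kappa(z_1,z_2)$ is at most $(1+a^2)/a^2$, giving $\kappa(z_1,z_2)\ge \frac{a^2}{1+a^2}|z_1-z_2|$, which is the first term in the minimum. Symmetrically, if both $|z_1|\ge a$ and $|z_2|\ge a$, I would write $|z_1-z_2| = |z_1||z_2|\,|1/z_1-1/z_2|$ and the chordal distance as
$$
\kappa(z_1,z_2)=\frac{|z_1||z_2|}{\sqrt{1+|z_1|^2}\sqrt{1+|z_2|^2}}\left|\frac{1}{z_1}-\frac{1}{z_2}\right|
=\frac{1}{\sqrt{1+1/|z_1|^2}\sqrt{1+1/|z_2|^2}}\left|\frac{1}{z_1}-\frac{1}{z_2}\right|,
$$
and since $1/|z_j|\le 1/a$ the same bound as before yields $\kappa(z_1,z_2)\ge\frac{a^2}{1+a^2}\bigl|\frac{1}{z_1}-\frac{1}{z_2}\bigr|$, the second term. (One should note that $\kappa$ is symmetric under $z\mapsto 1/z$, which makes this step a mirror image of the first.) The remaining case is that one point is large and the other is small in a way that is not covered above: after using symmetry in $z_1\leftrightarrow z_2$, this means $|z_1|<a<1$ while $|z_2|>1/a>1$. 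Here I would bound $\kappa$ from below directly: since $\kappa$ is a metric on the Riemann sphere (triangle inequality) and $|z_1|<a$ means $z_1$ is within chordal distance at most (something like) $a/\sqrt{1+a^2}$ of $0$, while $|z_2|>1/a$ means $z_2$ is within that same chordal distance of $\infty$, and $0$ and $\infty$ are at chordal distance $1$, the triangle inequality gives $\kappa(z_1,z_2)\ge 1-2\cdot\frac{a}{\sqrt{1+a^2}}$. I would then check that $1-\frac{2a}{\sqrt{1+a^2}}\ge\frac{1-a^2}{1+a^2}$ for $0<a<1$, a one-variable inequality; alternatively one can avoid invoking the triangle inequality and estimate $|z_1-z_2|\ge|z_2|-|z_1|>1/a-a$ together with crude bounds on the denominator to reach the constant $\frac{1-a^2}{1+a^2}$.

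The main obstacle I anticipate is the last case: pinning down the right constant and making the elementary inequality $1-\frac{2a}{\sqrt{1+a^2}}\ge\frac{1-a^2}{1+a^2}$ (or whatever variant the direct estimate produces) come out cleanly. The first two cases are essentially a two-line denominator estimate each, and the reciprocal symmetry halves the work; it is the ``opposite poles'' regime where one has to be a little careful to land exactly on $\frac{1-a^2}{1+a^2}$ rather than some weaker constant, and where deciding between the triangle-inequality argument and the brute-force estimate of numerator and denominator is the real choice to be made. Everything else is bookkeeping over the (overlapping) cases $\{|z_1|\le 1/a,|z_2|\le 1/a\}$, $\{|z_1|\ge a,|z_2|\ge a\}$, and the complement.
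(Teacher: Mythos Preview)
Your case decomposition and the handling of the first two cases coincide exactly with the paper's proof. The gap is in the third case. The triangle-inequality route does not deliver the constant $\frac{1-a^2}{1+a^2}$: the inequality $1-\frac{2a}{\sqrt{1+a^2}}\ge\frac{1-a^2}{1+a^2}$ you propose to check is \emph{false} (for instance at $a=\tfrac12$ the left side is about $0.106$ while the right side is $0.6$, and the left side is already negative once $a>1/\sqrt{3}$). Your fallback, bounding the numerator by $|z_1-z_2|>1/a-a$ and then applying ``crude bounds on the denominator'', also cannot work as stated, because $\sqrt{1+|z_2|^2}$ is unbounded when $|z_2|>1/a$; numerator and denominator cannot be estimated separately here.

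The paper handles this case geometrically: on the Riemann sphere the regions $\{|z|\le a\}$ and $\{|z|\ge 1/a\}$ are two spherical caps, and the minimum chordal distance between them is attained at boundary points, giving exactly $\kappa(a,1/a)=\frac{1-a^2}{1+a^2}$. If you prefer an analytic version, write
\[
\frac{|z_2|-|z_1|}{\sqrt{1+|z_1|^2}\,\sqrt{1+|z_2|^2}}
=\frac{|z_2|}{\sqrt{1+|z_2|^2}}\cdot\frac{1}{\sqrt{1+|z_1|^2}}
-\frac{|z_1|}{\sqrt{1+|z_1|^2}}\cdot\frac{1}{\sqrt{1+|z_2|^2}},
\]
and use the monotonicity of $t\mapsto t/\sqrt{1+t^2}$ and $t\mapsto 1/\sqrt{1+t^2}$ to bound each factor; this yields $\frac{1}{1+a^2}-\frac{a^2}{1+a^2}=\frac{1-a^2}{1+a^2}$ directly. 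Either way, the key point is that the extremal configuration is $|z_1|=a$, $|z_2|=1/a$, not the poles $0$ and $\infty$.
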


\subsection{Abstract set-up and main result} 

\noindent Our main result is given in  Theorem~\ref{main_theorem} below. We will assume throughout the following:
\begin{itemize}
 \item[(A1)] $R$ is a commutative ring without zero divisors and with identity. 
 \item[(A2)] $S$ is a complex, commutative, unital, semisimple Banach algebra. 
 \item[(A3)] $R\subset S$, that is, there is an injective  ring homomorphism  $\iota: R\rightarrow S$.
 \item[(A4)] $R$ is a full in $S$, that is, if $\mbfx\in R$ and $\iota (\mbfx)$ is invertible in $S$, then $\mbfx$ is invertible in $R$.
\end{itemize}

\noindent (A3) allows identification of 
elements of $R$ with elements of $S$. So in the sequel, if $\mbfx$ is an element of $R$, we will simply write $\mbfx$ (an element of  $S$!) instead of $\iota(\mbfx)$.

We will denote by $\mF(R)$ the field of fractions over $R$. An element $\mbfp\in \mF(R)$ is said to have a {\em  coprime factorization over $R$} if 
$$
\mbfp=\displaystyle \frac{\mbfn}{\mbfd},
$$
where $\mbfn,\mbfd\in R$, $\mbfd\neq 0$ and there exist $\mbfx,\mbfy\in R$ such that $\mbfn\mbfx+\mbfd\mbfy=1$.

We define the subset of {\em coprime factorizable plants over} $R$ to be the set 
$$
\mS(R):=\{\mbfp \in \mF(R): \mbfp \textrm{ has a coprime factorization}\}.
$$
The maximal ideal space of $S$ is denoted by $M(S)$. 
If $\mbfx\in S$, then we denote by $\widehat{\mbfx}$ the Gelfand transform of $\mbfx$. Also, we set 
$$
\|\mbfx\|_\infty:=\max_{\varphi \in M(S)}|\widehat{\mbfx}(\varphi)|.
$$
If $\mbfp_1,\mbfp_2 \in \mS(R)$, then the 
{\em chordal distance} $\kappa$ between $\mbfp_1,\mbfp_2$, which have coprime factorizations 
$$
\mbfp_1=\displaystyle \frac{\mbfn_1}{\mbfd_1}\textrm{ and }\mbfp_2=\displaystyle \frac{\mbfn_2}{\mbfd_2}, 
$$
is 
$$
\kappa(\mbfp_1,\mbfp_2):=\sup_{\varphi \in M(S)} 
\frac{|\widehat{\mbfn_1}(\varphi)\widehat{\mbfd_2}(\varphi)-\widehat{\mbfn_2}(\varphi)\widehat{\mbfd_1}(\varphi)|}{
\sqrt{|\widehat{\mbfn_1}(\varphi)|^2+|\widehat{\mbfd_1}(\varphi)|^2} 
\sqrt{|\widehat{\mbfn_2}(\varphi)|^2+|\widehat{\mbfd_2}(\varphi)|^2}}.
$$
The function $\kappa$ given by the above expression is well-defined. Indeed, if 
$$
\mbfp_1=\frac{\mbfn_1}{\mbfd_1}=\frac{\widetilde{\mbfn}_1}{\widetilde{\mbfd}_1},
$$
then $\mbfn_1 \widetilde{\mbfd}_1=\widetilde{\mbfn}_1\mbfd_1$, and so, for each $\varphi \in M(S)$, we have 
$
\widehat{\mbfn_1}(\varphi) \widehat{\widetilde{\mbfd}_1}(\varphi)=\widehat{\widetilde{\mbfn}_1}(\varphi)\widehat{\mbfd_1}(\varphi).
$ Using this one can see that 
$$
\frac{|\widehat{\mbfn_1}(\varphi)\widehat{\mbfd_2}(\varphi)-\widehat{\mbfn_2}(\varphi)\widehat{\mbfd_1}(\varphi)|}{
\sqrt{|\widehat{\mbfn_1}(\varphi)|^2+|\widehat{\mbfd_1}(\varphi)|^2} }
=
\frac{|\widehat{\widetilde{\mbfn}_1}(\varphi)\widehat{\mbfd_2}(\varphi)-\widehat{\mbfn_2}(\varphi)\widehat{\widetilde{\mbfd}_1}(\varphi)|}{
\sqrt{|\widehat{\widetilde{\mbfn}_1}(\varphi)|^2+|\widehat{\widetilde{\mbfd}_1}(\varphi)|^2} },
 $$
and so it follows that the expression in the definition of $\kappa$ is independent of any particular 
choice of a coprime factorization of either plant.

We have the following result.

\begin{proposition}
\label{main_proposition} 
$\kappa$ is a metric on $\mS(R)$.
\end{proposition}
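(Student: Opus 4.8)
The plan is to verify, directly from the displayed formula, that $\kappa$ is finite-valued, nonnegative, symmetric, satisfies the triangle inequality, and vanishes exactly on the diagonal. It is convenient to abbreviate, for $\varphi\in M(S)$ and plants $\mbfp_i=\mbfn_i/\mbfd_i\in\mS(R)$,
$$
\chi_\varphi(\mbfp_1,\mbfp_2):=\frac{|\widehat{\mbfn_1}(\varphi)\widehat{\mbfd_2}(\varphi)-\widehat{\mbfn_2}(\varphi)\widehat{\mbfd_1}(\varphi)|}{\sqrt{|\widehat{\mbfn_1}(\varphi)|^2+|\widehat{\mbfd_1}(\varphi)|^2}\,\sqrt{|\widehat{\mbfn_2}(\varphi)|^2+|\widehat{\mbfd_2}(\varphi)|^2}},
$$
so that $\kappa(\mbfp_1,\mbfp_2)=\sup_{\varphi\in M(S)}\chi_\varphi(\mbfp_1,\mbfp_2)$. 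First I would observe that a coprime factorization $\mbfp_i=\mbfn_i/\mbfd_i$, say with $\mbfn_i\mbfx_i+\mbfd_i\mbfy_i=1$ for some $\mbfx_i,\mbfy_i\in R$, forces $\widehat{\mbfn_i}(\varphi)\widehat{\mbfx_i}(\varphi)+\widehat{\mbfd_i}(\varphi)\widehat{\mbfy_i}(\varphi)=1$, hence $(\widehat{\mbfn_i}(\varphi),\widehat{\mbfd_i}(\varphi))\neq(0,0)$ and the denominators above never vanish; moreover the elementary identity $(|z|^2+|w|^2)(|z'|^2+|w'|^2)=|z\overline{z'}+w\overline{w'}|^2+|zw'-z'w|^2$ shows each $\chi_\varphi(\mbfp_1,\mbfp_2)\in[0,1]$, so $\kappa$ is a well-defined map $\mS(R)\times\mS(R)\to[0,1]$. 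Nonnegativity and symmetry of $\kappa$ are then immediate from the corresponding (obvious) properties of the functions $\chi_\varphi$.

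The heart of the matter is the triangle inequality, which I would obtain pointwise in $\varphi$. Fix $\varphi\in M(S)$ and three plants, put $a_i=\widehat{\mbfn_i}(\varphi)$, $b_i=\widehat{\mbfd_i}(\varphi)$ and let $u_i:=(a_i,b_i)/\sqrt{|a_i|^2+|b_i|^2}\in\mC^2$ (a unit vector, since $(a_i,b_i)\neq(0,0)$). The identity quoted above rearranges to $\chi_\varphi(\mbfp_i,\mbfp_j)^2=1-|\langle u_i,u_j\rangle|^2$, where $\langle\cdot,\cdot\rangle$ is the standard Hermitian inner product on $\mC^2$. Associating to $u_i$ the rank-one orthogonal projection $P_i:=u_iu_i^{\ast}$, viewed as a point of the real Euclidean space $\calH$ of $2\times 2$ Hermitian matrices with the Hilbert--Schmidt norm, a short trace computation (using $P_i^2=P_i$, $\operatorname{tr}P_i=1$, $\operatorname{tr}(P_iP_j)=|\langle u_i,u_j\rangle|^2$) gives $\|P_i-P_j\|_{\mathrm{HS}}^2=2\bigl(1-|\langle u_i,u_j\rangle|^2\bigr)=2\,\chi_\varphi(\mbfp_i,\mbfp_j)^2$. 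Hence $\chi_\varphi(\mbfp_i,\mbfp_j)=\tfrac{1}{\sqrt 2}\|P_i-P_j\|_{\mathrm{HS}}$, so $\chi_\varphi$ inherits the triangle inequality from the norm on $\calH$ (equivalently, $\chi_\varphi(\mbfp_i,\mbfp_j)$ is the chordal distance on the Riemann sphere $\mathbf{S}$ between the stereographic images of $a_i/b_i,a_j/b_j\in\mC\cup\{\infty\}$, which is classically a metric). Consequently, for every $\varphi\in M(S)$,
$$
\chi_\varphi(\mbfp_1,\mbfp_3)\le\chi_\varphi(\mbfp_1,\mbfp_2)+\chi_\varphi(\mbfp_2,\mbfp_3)\le\kappa(\mbfp_1,\mbfp_2)+\kappa(\mbfp_2,\mbfp_3),
$$
and taking the supremum over $\varphi$ gives $\kappa(\mbfp_1,\mbfp_3)\le\kappa(\mbfp_1,\mbfp_2)+\kappa(\mbfp_2,\mbfp_3)$.

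It remains to check that $\kappa(\mbfp_1,\mbfp_2)=0$ if and only if $\mbfp_1=\mbfp_2$ in $\mF(R)$. If $\mbfp_1=\mbfp_2$, then cross-multiplication in the field $\mF(R)$ (using that $R$ is an integral domain, (A1)) gives $\mbfn_1\mbfd_2=\mbfn_2\mbfd_1$ in $R$, whence applying the Gelfand transform kills the numerator of every $\chi_\varphi$ and $\kappa(\mbfp_1,\mbfp_2)=0$. Conversely, if $\kappa(\mbfp_1,\mbfp_2)=0$ then $\widehat{\mbfn_1\mbfd_2-\mbfn_2\mbfd_1}(\varphi)=0$ for all $\varphi\in M(S)$; semisimplicity of $S$ (A2) makes the Gelfand transform injective, so $\mbfn_1\mbfd_2-\mbfn_2\mbfd_1=0$ in $S$, and injectivity of $\iota$ (A3) then forces $\mbfn_1\mbfd_2=\mbfn_2\mbfd_1$ in $R$, i.e.\ $\mbfp_1=\mbfp_2$ in $\mF(R)$. (Only (A1)--(A3) are needed for this proposition; fullness (A4) plays no role.)

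The only genuine obstacle is the pointwise triangle inequality, i.e.\ the fact that $\chi_\varphi$ is a metric on $\mC\cup\{\infty\}$; realizing it, via the rank-one projection $u\mapsto uu^{\ast}$ (or via stereographic projection onto $\mathbf{S}$), as a Euclidean distance reduces that to the triangle inequality for a norm. The remaining verifications are routine bookkeeping with the Gelfand transform and with equality of fractions in $\mF(R)$.
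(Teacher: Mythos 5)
Your proof is correct and follows essentially the same route as the paper: nonnegativity and symmetry are immediate, definiteness uses semisimplicity of $S$ together with injectivity of $\iota$ exactly as in the paper, and the triangle inequality is obtained pointwise in $\varphi$ and then passed to the supremum. The only difference is in presentation: where the paper justifies the pointwise inequality by identifying the chordal quantity with the Euclidean distance between the corresponding points of the Riemann sphere in $\mR^3$, you realize it as $\tfrac{1}{\sqrt{2}}\|P_1-P_2\|_{\mathrm{HS}}$ for rank-one projections (an equivalent Euclidean embedding, made explicit), and you add the worthwhile detail, via the Bezout identity, that the denominators $\sqrt{|\widehat{\mbfn_i}(\varphi)|^2+|\widehat{\mbfd_i}(\varphi)|^2}$ never vanish.
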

\begin{proof} The proof is straightforward, but we give the details as they elucidate the 
use of the basic assumptions in our abstract setting. 

\medskip 

\noindent (D1) If $\mbfp_1,\mbfp_2 \in \mS(R)$, then it is clear from the expression for  $\kappa(\mbfp_1,\mbfp_2)$ that it is nonnegative. 
Furthermore, $\kappa(\mbfp,\mbfp)=0$ for any $\mbfp\in \mS(R)$. 

\noindent Finally, if $\mbfp_1, \mbfp_2\in \mS(R)$ are such that 
$\kappa(\mbfp_1,\mbfp_2)=0$, then we must have, 
with $\mbfp_1,\mbfp_2$ having coprime factorizations 
$$
\mbfp_1=\displaystyle \frac{\mbfn_1}{\mbfd_1}\textrm{ and }
\mbfp_2=\displaystyle \frac{\mbfn_2}{\mbfd_2},
$$
that   for all $\varphi \in M(S)$ that 
$
\widehat{\mbfn_1}(\varphi)\widehat{\mbfd_2}(\varphi)-\widehat{\mbfn_2}(\varphi)\widehat{\mbfd_1}(\varphi)=0,
$
and by (A3) and the semisimplicity of the Banach algebra (A2), we obtain  
$\mbfn_1\mbfd_2=\mbfn_2\mbfd_1$, that is, $\mbfp_1=\mbfp_2$. 

\medskip 

\noindent (D2) If $\mbfp_1,\mbfp_2 \in \mS(R)$, then it is clear from the expression for  $\kappa$ that 
 $\kappa(\mbfp_1,\mbfp_2)=\kappa(\mbfp_2,\mbfp_1)$. 

\medskip 

\noindent (D3) Let $\mbfp_1, \mbfp_2, \mbfp_3\in  \mS(R)$ have coprime factorizations 
$$
\mbfp_1=\displaystyle \frac{\mbfn_1}{\mbfd_1}, \quad \mbfp_2=\displaystyle \frac{\mbfn_2}{\mbfd_2}, \quad \mbfp_3=\displaystyle \frac{\mbfn_3}{\mbfd_3}.
$$
Since the usual Euclidean distance in $\mR^3$ satisfies the triangle inequality, it follows that 
\begin{eqnarray*}
\frac{|\widehat{\mbfn_1}(\varphi)\widehat{\mbfd_2}(\varphi)-\widehat{\mbfn_2}(\varphi)\widehat{\mbfd_1}(\varphi)|}{
\sqrt{|\widehat{\mbfn_1}(\varphi)|^2+|\widehat{\mbfd_1}(\varphi)|^2} 
\sqrt{|\widehat{\mbfn_2}(\varphi)|^2+|\widehat{\mbfd_2}(\varphi)|^2}} 
&\leq&
\frac{|\widehat{\mbfn_1}(\varphi)\widehat{\mbfd_3}(\varphi)-\widehat{\mbfn_3}(\varphi)\widehat{\mbfd_1}(\varphi)|}{
\sqrt{|\widehat{\mbfn_1}(\varphi)|^2+|\widehat{\mbfd_1}(\varphi)|^2} 
\sqrt{|\widehat{\mbfn_3}(\varphi)|^2+|\widehat{\mbfd_3}(\varphi)|^2}}
\\
&&+
\frac{|\widehat{\mbfn_3}(\varphi)\widehat{\mbfd_2}(\varphi)-\widehat{\mbfn_2}(\varphi)\widehat{\mbfd_3}(\varphi)|}{
\sqrt{|\widehat{\mbfn_3}(\varphi)|^2+|\widehat{\mbfd_3}(\varphi)|^2} 
\sqrt{|\widehat{\mbfn_2}(\varphi)|^2+|\widehat{\mbfd_2}(\varphi)|^2}}
\end{eqnarray*}
Consequently, $\kappa(\mbfp_1, \mbfp_2) \leq \kappa(\mbfp_1, \mbfp_2)+\kappa(\mbfp_1, \mbfp_2)$.
This completes the proof.
\end{proof}

Our main result is the following, which we will prove in the next section.

\begin{theorem}
 \label{main_theorem}
Suppose that $\mbfp_0, \mbfp \in \mS(R)$ and $\mbfc\in R$ is such that 
 $
\mbfg_0:= \displaystyle \frac{\mbfp_0}{1-\mbfc\mbfp_0} \in R.
$
Set $k:= \|\mbfc\|_\infty$ and $g=\|\mbfg_0\|_\infty$. If 
$$
\kappa(\mbfp,\mbfp_0)< \displaystyle \frac{1}{3} \min\left\{1,\;\; \frac{1}{g}, \;\;\frac{1}{k(1+kg)} \right\},
$$
then $\mbfp$ is also stabilized by $\mbfc$.
\end{theorem}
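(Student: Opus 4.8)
The plan is to turn the algebraic condition ``$\mbfc$ stabilizes $\mbfp$'' into a pointwise non-vanishing condition on the maximal ideal space $M(S)$, and then to run Partington's argument pointwise; the role of the abstract hypotheses (A3)--(A4) is precisely to legitimise this translation.

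First I would fix coprime factorizations $\mbfp_0=\mbfn_0/\mbfd_0$ and $\mbfp=\mbfn/\mbfd$ over $R$ and put $\mbfe:=\mbfd-\mbfc\mbfn\in R$. A direct calculation shows that every entry of $H(\mbfp,\mbfc)$ is of the form $r\mbfe^{-1}$ with $r\in R$, so $\mbfc$ stabilizes $\mbfp$ as soon as $\mbfe$ is invertible in $R$; by fullness (A4) it suffices that $\iota(\mbfe)$ be invertible in $S$, and this holds exactly when the Gelfand transform $\widehat{\mbfe}=\widehat{\mbfd}-\widehat{\mbfc}\,\widehat{\mbfn}$ has no zero on $M(S)$. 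The same computation applied to $\mbfp_0$, combined with the hypothesis $\mbfg_0\in R$, shows that $\mbfe_0:=\mbfd_0-\mbfc\mbfn_0$ satisfies $\widehat{\mbfe_0}(\varphi)\neq 0$ for every $\varphi$: writing $\mbfg_0=\mbfn_0/\mbfe_0$ in $\mF(R)$ gives $\mbfg_0\mbfe_0=\mbfn_0$ and $(1+\mbfc\mbfg_0)\mbfe_0=\mbfd_0$ in $R$, so if $\widehat{\mbfe_0}(\varphi)$ vanished then both $\widehat{\mbfn_0}(\varphi)$ and $\widehat{\mbfd_0}(\varphi)$ would vanish, contradicting a B\'ezout relation $\mbfn_0\mbfx_0+\mbfd_0\mbfy_0=1$.

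Next I would fix $\varphi\in M(S)$, abbreviate $c:=\widehat{\mbfc}(\varphi)$ and $g_0:=\widehat{\mbfg_0}(\varphi)$ (so $|c|\le k$, $|g_0|\le g$), and regard $w_0:=\widehat{\mbfn_0}(\varphi)/\widehat{\mbfd_0}(\varphi)$ and $w:=\widehat{\mbfn}(\varphi)/\widehat{\mbfd}(\varphi)$ as points of $\mC\cup\{\infty\}$. The term under the supremum in $\kappa(\mbfp,\mbfp_0)$ at $\varphi$ is precisely the chordal distance on the Riemann sphere between $w$ and $w_0$, hence it is $<\delta:=\frac13\min\{1,1/g,1/(k(1+kg))\}$. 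On the other hand, $\widehat{\mbfd}(\varphi)-c\,\widehat{\mbfn}(\varphi)=0$ if and only if $w=1/c$ in $\mC\cup\{\infty\}$ (with the convention $1/0:=\infty$); so it is enough to show $w\neq 1/c$, and for this, by the triangle inequality for the chordal distance, it is enough to show that the chordal distance from $w_0$ to $1/c$ is at least $\delta$.

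That last estimate is where Lemma~\ref{lemma_partington} enters, applied with $z_1=w_0$, $z_2=1/c$ and $a=1/\sqrt2$, the point being that both coefficients $\frac{a^2}{1+a^2}$ and $\frac{1-a^2}{1+a^2}$ then equal $\frac13$. From $g_0=w_0/(1-cw_0)$ one reads off the two identities $\frac{1}{w_0}-c=\frac{1}{g_0}$ and $\frac{1}{c}-w_0=\frac{1}{c(1+cg_0)}$, whence $\bigl|\frac1{w_0}-c\bigr|=\frac1{|g_0|}\ge\frac1g$ and $\bigl|w_0-\frac1c\bigr|=\frac1{|c|\,|1+cg_0|}\ge\frac1{k(1+kg)}$, so Lemma~\ref{lemma_partington} gives that the chordal distance from $w_0$ to $1/c$ is $\ge\frac13\min\{1,1/g,1/(k(1+kg))\}=\delta$, as required. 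The degenerate configurations $\widehat{\mbfd_0}(\varphi)=0$ (so $w_0=\infty$, forcing $c\neq 0$ and $cg_0=-1$), $\widehat{\mbfn_0}(\varphi)=0$ (so $w_0=0=g_0$), and $c=0$ (so $w_0=g_0$) must be handled separately; in each of them $g_0$ pins down $w_0$ or $1/c$ subject to $|g_0|\le g$, $|c|\le k$, and the relevant chordal distance (one of $\frac1{\sqrt{1+|w_0|^2}}$, $\frac1{\sqrt{1+|c|^2}}$, or $1$) is seen by inspection to exceed $\delta$. I expect the only real friction to be this degenerate-case bookkeeping; the substance is the pair of identities above together with the sharp choice $a=1/\sqrt2$, after which the conclusion follows because $\varphi\in M(S)$ was arbitrary, so $\widehat{\mbfe}$ is non-vanishing on $M(S)$ and hence $\mbfe$ is invertible in $R$.
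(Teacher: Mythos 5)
Your proposal is correct and is essentially the paper's own argument: fullness (A4) together with Gelfand theory reduces stabilization of $\mbfp$ by $\mbfc$ to the non-vanishing of $\widehat{\mbfd}-\widehat{\mbfc}\,\widehat{\mbfn}$ on $M(S)$, and your lower bound on the chordal distance from $\widehat{\mbfn_0}/\widehat{\mbfd_0}$ to $1/\widehat{\mbfc}$ uses the same two identities (equivalent to the paper's \eqref{eq_4} and \eqref{eq_5}) and Lemma~\ref{lemma_partington} with $a=1/\sqrt{2}$, with the same degenerate configurations treated separately, and your claimed bounds in those cases do check out against $\delta=\frac13\min\{1,1/g,1/(k(1+kg))\}$. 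The only difference is organizational: you verify the excluded-value estimate directly at every $\varphi\in M(S)$, whereas the paper argues by contradiction at a single $\varphi_0$ where $\widehat{\mbfd}-\widehat{\mbfn}\,\widehat{\mbfc}$ vanishes, and you establish only zero-freeness of $\widehat{\mbfd_0-\mbfc\mbfn_0}$ where the paper shows invertibility in $R$; neither change is substantive.
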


\section{Proof of the main result}

Lemma~\ref{lemma_partington} plays a key role in the proof of Theorem~\ref{main_theorem}, and so we 
include its short proof (taken from \cite[Lemma~2.1, p.84]{Par92}) here. 

\begin{proof}[Proof of Lemma~\ref{lemma_partington}] Consider the three possible cases, which are collectively exhaustive:
\begin{itemize}
 \item[$\underline{1}^\circ$] $|z_1| \leq \displaystyle \frac{1}{a}$ and $|z_2|\leq  \displaystyle\frac{1}{a}$. Then 
$\kappa(z_1,z_2) \geq  \displaystyle\frac{a^2}{1+a^2} |z_1-z_2|$. 

\item[$\underline{2}^\circ$] $|z_1| \geq a$ and $|z_2|\geq  a$. 
Then $\displaystyle\frac{1}{|z_1|}\leq \displaystyle\frac{1}{a}$ and $ \displaystyle\frac{1}{|z_2|}\leq \displaystyle\frac{1}{a}$. 
As $\kappa(z_1,z_2)=\kappa\left( \displaystyle\frac{1}{z_1}, \displaystyle\frac{1}{z_2}\right)$, it follows from $\underline{1}^\circ$ above that 
$\kappa(z_1,z_2) \geq \displaystyle\frac{a^2}{1+a^2} \left|\frac{1}{z_1}-\frac{1}{z_2}\right|$. 

\item[$\underline{3}^\circ$] $|z_1| \leq a$ and $|z_2|\geq  \displaystyle\frac{1}{a}$, or vice versa. 
Since the distance between the spherical caps on the Riemann sphere corresponding to the regions 
$\{z\in \mC: |z|\leq a\}$ and $\left\{z\in \mC: |z| \geq \displaystyle\frac{1}{a}\right\}$ is 
$\kappa\left(a,\displaystyle\frac{1}{a}\right)= \displaystyle\frac{1-a^2}{1+a^2}$, it follows that 
$\kappa(z_1,z_2)\geq \displaystyle\frac{1-a^2}{1+a^2}$.  
\end{itemize}
This completes the proof.
\end{proof}

\begin{proof}[Proof of Theorem~\ref{main_theorem}] Let $\mbfp_0=\displaystyle \frac{\mbfn_0}{\mbfd_0}$ 
and $\mbfp=\displaystyle \frac{\mbfn}{\mbfd}$ be coprime factorizations of $\mbfp_0$ and $\mbfp$. 

Since $\mbfc$ stabilizes $\mbfp_0$, it follows in particular that 
$$
\frac{1}{1-\mbfp_0 \mbfc}=\frac{\mbfd_0}{\mbfd_0-\mbfn_0 \mbfc} \in R \textrm{ and } 
\frac{\mbfp_0}{1-\mbfp_0 \mbfc}=\frac{\mbfn_0}{\mbfd_0-\mbfn_0 \mbfc} \in R. 
$$
Moreover, since $(\mbfn_0,\mbfd_0)$ are coprime in $R$, there exist $\mbfx,\mbfy\in R$ such that 
$
\mbfn_0\cdot  \mbfx +\mbfd_0\cdot  \mbfy =1.
$
Hence it follows that 
$$
\frac{1}{\mbfd_0-\mbfn_0 \mbfc}=\frac{\mbfn_0\cdot  \mbfx +\mbfd_0\cdot  \mbfy}{\mbfd_0-\mbfn_0 \mbfc}
=\frac{\mbfp_0}{1-\mbfp_0 \mbfc} \cdot \mbfx +\frac{1}{1-\mbfp_0 \mbfc}\cdot \mbfy \in R.
$$
So $\mbfd_0-\mbfn_0 \mbfc$ is invertible as an element of $R$. In particular, it is also 
invertible as an element of $S$, and so 
\begin{equation}
 \label{eq_0}
\textrm{for all } \varphi \in M(S), \;\;\widehat{\mbfd_0}(\varphi)-\widehat{\mbfn_0}(\varphi) \widehat{\mbfc}(\varphi) \neq 0.
\end{equation}
Suppose that $\mbfd-\mbfn \mbfc $ is invertible as an element of $R$, then 
\begin{eqnarray*}
\frac{1}{1-\mbfp \mbfc}=\mbfd \cdot (\mbfd-\mbfn \mbfc )^{-1} \in R, &&
\frac{\mbfp}{1-\mbfp \mbfc}=\mbfn \cdot (\mbfd-\mbfn \mbfc )^{-1} \in R, \\
\frac{\mbfc}{1-\mbfp \mbfc}=\mbfc \cdot \mbfd \cdot (\mbfd-\mbfn \mbfc )^{-1} \in R, && 
\frac{\mbfp\mbfc}{1-\mbfp \mbfc}=-1+\mbfd \cdot (\mbfd-\mbfn \mbfc )^{-1} \in R, 
\end{eqnarray*}
and so $H(\mbfp, \mbfc) \in R^{2\times 2}$, showing that $\mbfp$ is also stabilized by $\mbfc$, and we are done.

So suppose that $\mbfd-\mbfn \mbfc $ is not invertible as an element of $R$. Then 
$\mbfd-\mbfn \mbfc $ is not invertible in $S$ too, since by assumption (A4), $R$ is a full 
subring of $S$. Thus there is a $\varphi_0\in M(S)$ such that 
\begin{equation}
 \label{eq_1}
\widehat{\mbfd}(\varphi_0)-\widehat{\mbfn}(\varphi_0) \widehat{\mbfc}(\varphi_0) =0.
\end{equation}

We consider the following cases. 

\bigskip 

\noindent $\underline{1}^\circ$ If $\widehat{\mbfd}(\varphi_0)=0$, then $\widehat{\mbfn}(\varphi_0)\neq 0$ by the 
coprimeness of $(\mbfd,\mbfn)$ and so by \eqref{eq_1}, $\widehat{\mbfc}(\varphi_0)=0$. 
Hence by \eqref{eq_0}, $\widehat{\mbfd_0}(\varphi_0)\neq 0$. 
So in this case we have 
$$
\kappa(\mbfp, \mbfp_0)\geq \frac{|\widehat{\mbfd_0}(\varphi_0)|}{\sqrt{|\widehat{\mbfn_0}(\varphi_0)|^2+|\widehat{\mbfd_0}(\varphi_0)|^2}}
= \kappa\left(\frac{\widehat{\mbfn_0}(\varphi_0)}{\widehat{\mbfd_0}(\varphi_0)}, \infty\right).
$$
But since $\widehat{\mbfc}(\varphi_0)=0$, we have 
$$
\left|\frac{\widehat{\mbfn_0}(\varphi_0)}{\widehat{\mbfd_0}(\varphi_0)}\right|
=
\left|\frac{\widehat{\mbfn_0}(\varphi_0)}{\widehat{\mbfd_0}(\varphi_0)-\widehat{\mbfn_0}(\varphi_0) \cdot \widehat{\mbfc}(\varphi_0) }\right|
=
|\widehat{\mbfg_0}(\varphi_0)|\leq \|\mbfg_0\|_\infty =g.
$$
Thus if $a$ is any number such that $0<a<1$, we have 
\begin{equation}
 \label{eq_2}
\kappa(\mbfp, \mbfp_0)\geq \kappa (g,\infty)=\kappa \left( \frac{1}{g}, 0\right)\geq \min \left\{\frac{a^2}{1+a^2}  \frac{1}{g},\;\;\frac{1-a^2}{1+a^2} \right\}.
\end{equation}

\bigskip 

\noindent $\underline{2}^\circ$ Now let $\widehat{\mbfd}(\varphi_0)\neq 0$. Then using \eqref{eq_1}, it follows that 
$\widehat{\mbfn}(\varphi_0)\neq 0$ and $\widehat{\mbfc}(\varphi_0)\neq 0$. 

Suppose first that $\widehat{\mbfd_0}(\varphi_0)=0$. By the coprimeness of 
$(\mbfd_0,\mbfn_0)$, we have $\widehat{\mbfn_0}(\varphi_0)\neq 0$. Then we have 
$$
\kappa(\mbfp, \mbfp_0)\geq \frac{|\widehat{\mbfd}(\varphi_0)|}{\sqrt{|\widehat{\mbfn}(\varphi_0)|^2+|\widehat{\mbfd}(\varphi_0)|^2}}
= \kappa\left(\frac{\widehat{\mbfn}(\varphi_0)}{\widehat{\mbfd}(\varphi_0)}, \infty\right)=
\kappa\left(\frac{1}{\widehat{\mbfc}(\varphi_0)}, \infty\right),
$$
where we have used \eqref{eq_1} to obtain the last equality. But 
$$
g= \|\mbfg_0\|_\infty =\sup_{\varphi \in M(S)} \left|\frac{\widehat{\mbfn_0}(\varphi)}{
\widehat{\mbfd_0}(\varphi)-\widehat{\mbfn_0}(\varphi)\widehat{\mbfc_0}(\varphi)}\right|
\geq \left|\frac{\widehat{\mbfn_0}(\varphi_0)}{
\widehat{\mbfd_0}(\varphi_0)-\widehat{\mbfn_0}(\varphi_0)\widehat{\mbfc_0}(\varphi_0)}\right|
=\frac{1}{|\widehat{\mbfc_0}(\varphi_0)|}.
$$
Thus if $a$ is any number such that $0<a<1$, we have 
\begin{equation}
 \label{eq_3}
\kappa(\mbfp, \mbfp_0)\geq\kappa\left(\frac{1}{\widehat{\mbfc}(\varphi_0)}, \infty\right) 
\geq \kappa (g,\infty)=\kappa \left( \frac{1}{g}, 0\right)\geq \min \left\{ \frac{a^2}{1+a^2} \frac{1}{g},\;\;\frac{1-a^2}{1+a^2} \right\}.
\end{equation}

Finally, suppose that $\widehat{\mbfd_0}(\varphi_0)\neq 0$. If $\widehat{\mbfn_0}(\varphi_0)=0$, then 
$$
\kappa(\mbfp, \mbfp_0)\geq \frac{|\widehat{\mbfn}(\varphi_0)|}{\sqrt{|\widehat{\mbfn}(\varphi_0)|^2+|\widehat{\mbfd}(\varphi_0)|^2}}
=\kappa\left(\frac{1}{\widehat{\mbfc}(\varphi_0)}, \infty\right),
$$
using \eqref{eq_1}, and proceeding in the same manner as above, we obtain \eqref{eq_3} once again. 

Suppose now that $\widehat{\mbfn_0}(\varphi_0)\neq 0$. We have  
$$
\kappa(\mbfp, \mbfp_0)\geq \kappa \left( \frac{\widehat{\mbfn}(\varphi_0)}{\widehat{\mbfd}(\varphi_0)}, 
\frac{\widehat{\mbfn_0}(\varphi_0)}{\widehat{\mbfd_0}(\varphi_0)} \right).
$$
Using \eqref{eq_1} we have that 
$$
\frac{\widehat{\mbfn}(\varphi_0)}{\widehat{\mbfd}(\varphi_0)}-\frac{\widehat{\mbfn_0}(\varphi_0)}{\widehat{\mbfd_0}(\varphi_0)}
=
\frac{1}{\widehat{\mbfc}(\varphi_0)}-\frac{\widehat{\mbfn_0}(\varphi_0)}{\widehat{\mbfd_0}(\varphi_0)}
=
\frac{1}{\widehat{\mbfc}(\varphi_0)}\left( 1-\widehat{\mbfc}(\varphi_0)\cdot \frac{\widehat{\mbfn_0}(\varphi_0)}{\widehat{\mbfd_0}(\varphi_0)}\right) .
$$
Clearly 
$$
\left|\frac{1}{\widehat{\mbfc}(\varphi_0)}\right|\geq \frac{1}{\|\mbfc\|_\infty} =\frac{1}{k}.
$$
Furthermore,
$$ 
\left|\frac{\widehat{\mbfd_0}(\varphi_0)}{\widehat{\mbfd_0}(\varphi_0)-\widehat{\mbfn_0}(\varphi_0)\widehat{\mbfc_0}(\varphi_0)}\right|
=
\left|1+\widehat{\mbfc_0}(\varphi_0)\frac{\widehat{\mbfn_0}(\varphi_0)}{\widehat{\mbfd_0}(\varphi_0)-\widehat{\mbfn_0}(\varphi_0)\widehat{\mbfc_0}(\varphi_0)}\right|
=
|1+\widehat{\mbfc_0}(\varphi_0)\widehat{\mbfg_0}(\varphi_0)|
\leq 
1+kg.
$$
Hence 
\begin{equation}
 \label{eq_4}
\left|\frac{\widehat{\mbfn}(\varphi_0)}{\widehat{\mbfd}(\varphi_0)}-\frac{\widehat{\mbfn_0}(\varphi_0)}{\widehat{\mbfd_0}(\varphi_0)}\right|
\geq \frac{1}{k(1+kg)}.
\end{equation}
Also, since $\widehat{\mbfn_0}(\varphi_0)\neq 0$, we have 
$$
\frac{\widehat{\mbfd}(\varphi_0)}{\widehat{\mbfn}(\varphi_0)}-\frac{\widehat{\mbfd_0}(\varphi_0)}{\widehat{\mbfn_0}(\varphi_0)}
=
\widehat{\mbfc}(\varphi_0)-\frac{\widehat{\mbfd_0}(\varphi_0)}{\widehat{\mbfn_0}(\varphi_0)}
=
-\frac{1}{\widehat{\mbfg_0}(\varphi_0)}.
$$
Thus 
\begin{equation}
 \label{eq_5}
\left|\frac{\widehat{\mbfd}(\varphi_0)}{\widehat{\mbfn}(\varphi_0)}-\frac{\widehat{\mbfd_0}(\varphi_0)}{\widehat{\mbfn_0}(\varphi_0)}\right| \geq 
\frac{1}{\|\mbfg_0\|_\infty} =\frac{1}{g}.
\end{equation}
Combining \eqref{eq_4} and \eqref{eq_5}, we obtain  that if $a$ is any number such that $0<a<1$, we have 
\begin{equation}
 \label{eq_6}
\kappa(\mbfp, \mbfp_0)\geq  \min \left\{ \frac{a^2}{1+a^2} \frac{1}{g},\;\;\frac{a^2}{1+a^2} \frac{1}{k(1+kg)}, \;\;\frac{1-a^2}{1+a^2} \right\}.
\end{equation}

Finally, \eqref{eq_2},\eqref{eq_3}, \eqref{eq_6} yield \eqref{eq_6} in {\em all} cases. With $a:=\displaystyle \frac{1}{\sqrt{2}}$, we obtain 
$$
\kappa(\mbfp, \mbfp_0)\geq  \frac{1}{3} \min  \left\{ \frac{1}{g},\;\; \frac{1}{k(1+kg)},\;\;1 \right\},
$$ 
which contradicts the hypothesis. Hence $\mbfd-\mbfn \mbfc$ is invertible as an element of $R$, 
and hence $\mbfp$ is stabilized by $\mbfc$. 
\end{proof}

\section{An example}

Consider the bidisc $\mD^2:=\mD\times \mD=\{(z_1,z_2)\in \mC^2: |z_1|< 1 \textrm{ and } |z_2|< 1\}$.
Let $R:=W^{1}(\mD^2)$ be the Wiener algebra of the bidisc, that is,
$$
W^1 (\mD^2):= \left\{f:= \sum_{k_1,k_2\geq 0} a_{k_1,k_2}z_1^{k_1} z_2^{k_2}: 
\|f\|_1:=\sum_{k_1,k_2\geq 0} |a_{k_1,k_2}|<+\infty\right\}.
$$
Then this is a relevant class of stable transfer functions arising in 
the analysis/synthesis of multidimensional digital filters, and membership in this class 
guarantees bounded input-bounded output (BIBO) stability; see for 
example \cite[\S2.1, p.3-4]{Bos}. 

Consider the nominal plant $\mbfp_0$ given by 
$$
\mbfp_0:=\frac{z_1 z_2}{z_1^2z_2^2-1} 
$$
which has the coprime factorization $\mbfp=\displaystyle \frac{\mbfn_0}{\mbfd_0}$, 
where 
$
\mbfn_0:=z_1 z_2, \; \mbfd_0:=z_1^2z_2^2-1.
$

A stable controller which stabilizes $\mbfp_0$ is 
$\mbfc:=z_1z_2\in W^1(\mD^2)$, and we have 
$$
\mbfg_0:=\displaystyle \frac{\mbfp_0}{1-\mbfp_0\mbfc}=-z_1z_2.
$$
We take $S:=A(\mD^2)$, namely the bidisc algebra of functions continuous on $\overline{\mD}\times \overline{\mD}$ and 
holomorphic functions in $\mD^2$, with 
pointwise operations and the supremum norm:
$$
\|f\|_\infty:= \sup_{z_1,z_2\in \mD} |f(z_1,z_2)|, \quad f\in A (\mD^2).
$$
Then since the maximal ideal spaces of $W^1(\mD^2)$ and of $A(\mD^2)$ can both be identified with 
$\overline{\mD}\times  \overline{\mD}$ \cite[Theorem~11.7, p.279]{Rud}, it follows that $W^1(\mD^2)$ 
is full subalgebra in $A(\mD^2)$. 

Clearly, $g:=\|\mbfg_0\|_\infty =\|-z_1 z_2\|_\infty=1$ and $k:=\|\mbfc\|_\infty=\|z_1z_2\|_\infty=1$. 
So for all $\mbfp \in \mS(W^1(\mD^2))$ satisfying 
$$
\kappa(\mbfp , \mbfp_0) < \frac{1}{3} \min\left\{1,\;\; \frac{1}{g}, \;\;\frac{1}{k(1+kg)} \right\}=\frac{1}{3} 
\min\left\{1, \;\; \frac{1}{1(1+1\cdot 1)}, 
\;\; \frac{1}{1}\right\}=\frac{1}{6},
$$
$\mbfp$ is also stabilized by $\mbfc$. 
In particular, if we consider plants of the form 
$$
\mbfp_\alpha:= \frac{z_1 z_2-\alpha}{z_1^2z_2^2-1},
$$
for real $\alpha$ satisfying $|\alpha|<1$, then we can estimate $
\kappa(\mbfp_\alpha,\mbfp_0)$ as follows. We have 
\begin{eqnarray*}
\kappa(\mbfp_\alpha,\mbfp_0)&=&\sup_{z_1,z_2\in \mD} \frac{|\alpha| |z_1^2 z_2^2 -1|}{\sqrt{|z_1 z_1-\alpha|^2+|z_1^2 z_2^2 -1|^2}\sqrt{|z_1 z_1|^2+|z_1^2 z_2^2 -1|^2}}
\\
&\leq& 
\sup_{z_1,z_2\in \mD} \frac{|\alpha|}{\sqrt{|z_1 z_1|^2+|z_1^2 z_2^2 -1|^2}}
\leq
\sup_{0\leq k\leq 1}  \frac{|\alpha|}{\sqrt{k^2+(1-k^2)^2}}=\frac{2}{\sqrt{3}}|\alpha|.
\end{eqnarray*}
Thus for $\alpha$ satisfying $|\alpha|<\displaystyle\frac{1}{4\sqrt{3}}$,  $\mbfp_\alpha$ is stabilized by $\mbfc$. 

\medskip

  \noindent {\bf Acknowledgements:} The author thanks Jonathan Partington 
  for kindly providing a copy of \cite{Par92}, and Rudolf Rupp for useful 
  comments on a previous draft of the article.

\end{document}